%------------------------------------------------------------------------------
% Beginning of journal.tex
%------------------------------------------------------------------------------
%
% AMS-LaTeX version 2 sample file for journals, based on amsart.cls.
%
%        ***     DO NOT USE THIS FILE AS A STARTER.      ***
%        ***  USE THE JOURNAL-SPECIFIC *.TEMPLATE FILE.  ***
%
% Replace amsart by the documentclass for the target journal, e.g., tran-l.
%
\documentclass{amsart}

\newtheorem{theorem}{Theorem}[section]
\newtheorem{lemma}[theorem]{Lemma}
\newtheorem{corollary}[theorem]{Corollary}

\theoremstyle{definition}

\theoremstyle{remark}
\newtheorem{remark}[theorem]{Remark}

%%%%%%%%%%%%%%%%%%%%%%%%%%%%%%%%%%%%%%%%%%%%%%%%%%%%%%%%%%%%%%%%%%%% added packages
\usepackage{amsmath}
\usepackage{amssymb}
\usepackage{graphicx}
\usepackage{mathrsfs}
\usepackage{algorithm}
\usepackage{algorithmicx}
\usepackage{appendix}
\usepackage{enumitem}
\usepackage{mdframed}
\usepackage{booktabs}
\usepackage{array}
\usepackage{caption}
\usepackage[colorlinks, linkcolor=blue, anchorcolor=blue, citecolor=blue]{hyperref}
\AtBeginDocument{
                 \label{CorrectFirstPageLabel}
                 
                }

\DeclareMathOperator*{\rank}{rank}
\DeclareMathOperator*{\diag}{diag}

\DeclareMathOperator*{\Span}{span}

\setcounter{MaxMatrixCols}{15}
%%%%%%%%%%%%%%%%%%%%%%%%%%%%%%%%%%%%%%%%%%%%%%%%%%%%%%%%%%%%%%%%%%%%%%%%%%%%%%%%%%

\numberwithin{equation}{section}

%    Absolute value notation

%    Blank box placeholder for figures (to avoid requiring any
%    particular graphics capabilities for printing this document).

\begin{document}

\title[Two-level convergence identity]{A new characterization of the convergence factor of two-level methods}

%    Information for first author
\author{Xuefeng Xu}
%    Address of record for the research reported here
\address{School of Mathematics, Southeast University, Nanjing 211189, China}
\email{xuxuefeng@lsec.cc.ac.cn; xuxuefeng@seu.edu.cn}

%    Information for second author
%\author{}
%\address{}
%\email{}

%    General info
\subjclass[2020]{Primary 65F08, 65F10, 65N55; Secondary 15A18}

%\date{January 1, 2001 and, in revised form, June 22, 2001.}

%\dedicatory{This paper is dedicated to our advisors.}

\keywords{Two-level methods, convergence factor, hierarchical basis, reduction-based methods}

\begin{abstract}
Multilevel methods are among the most efficient numerical methods for solving large-scale systems of equations that arise from discretized partial differential equations. Two-level convergence theory plays a fundamental role in the analysis and design of multilevel methods. In this paper, we present a concise and easy-to-use identity for characterizing the convergence factor of two-level methods, whose hierarchical spaces can be either overlapping or non-overlapping. In order to illustrate its usability and convenience, we give several applications, which offer new insights into the design of multilevel methods.
\end{abstract}

\maketitle

%\section*{This is an unnumbered first-level section head}
%This is an example of an unnumbered first-level heading.

%% The correct journal style for \specialsection is all uppercase; a known bug
%% in amsart.cls prevents this, so input must be uppercase until it is fixed.
%\specialsection*{This is a Special Section Head}
%\specialsection*{THIS IS A SPECIAL SECTION HEAD}
%This is an example of a special section head%
%%%%%%%%%%%%%%%%%%%%%%%%%%%%%%%%%%%%%%%%%%%%%%%%%%%%%%%%%%%%%%%%%%%%%%%%
%\footnote{Here is an example of a footnote. Notice that this footnote
%text is running on so that it can stand as an example of how a footnote
%with separate paragraphs should be written.
%\par
%And here is the beginning of the second paragraph.}%
%%%%%%%%%%%%%%%%%%%%%%%%%%%%%%%%%%%%%%%%%%%%%%%%%%%%%%%%%%%%%%%%%%%%%%%%

\section{Introduction}

Multilevel methods are among the most efficient numerical techniques for solving large-scale linear systems arising from the discretization of partial differential equations; see, e.g.,~\cite{Hackbusch1985,Briggs2000,Trottenberg2001,Vassilevski2008}. The foundation of multilevel methods is a two-level scheme, which consists of two complementary error-reduction processes: \textit{smoothing} (or \textit{local relaxation}) and \textit{coarse-level correction}. Usually, the smoothing process is a simple iterative method (e.g., the weighted Jacobi and Gauss--Seidel iterations), which is only effective at reducing high-frequency (or oscillatory) error in general. The remaining low-frequency (or smooth) error will be further reduced on a coarser level by the correction process. Satisfactory convergence can be achieved when the error-reduction processes complement each other very well.

Assume that $A\in\mathbb{R}^{n\times n}$ is symmetric positive definite (SPD) and $\mathbf{f}\in\mathbb{R}^{n}$. Consider two-level methods for solving the linear system
\begin{equation}\label{system}
A\mathbf{u}=\mathbf{f}.
\end{equation}
Let $S\in\mathbb{R}^{n\times n_{\rm s}}\,(n_{\rm s}\leq n)$ be of full column rank, and let $M_{\rm s}\in\mathbb{R}^{n_{\rm s}\times n_{\rm s}}$ be a nonsingular matrix such that $M_{\rm s}+M_{\rm s}^{T}-S^{T}AS$ is positive definite (e.g., $M_{\rm s}$ is chosen as the lower triangular part of $S^{T}AS$). For a given initial guess $\mathbf{u}^{(0)}\in\mathbb{R}^{n}$, the smoothing process can be described as follows:
\begin{equation}\label{smoothing}
\mathbf{u}^{(k+1)}=\mathbf{u}^{(k)}+SM_{\rm s}^{-1}S^{T}\big(\mathbf{f}-A\mathbf{u}^{(k)}\big).
\end{equation}
Indeed, the smoothing iteration~\eqref{smoothing} covers both \textit{local} ($n_{\rm s}<n$) and \textit{global} ($n_{\rm s}=n$) cases. In the global case, $S$ will be simply chosen as the $n\times n$ identity matrix $I_{n}$. In general, the iteration~\eqref{smoothing} can only reduce high-frequency error effectively. The remaining low-frequency error will be treated by the correction process
\begin{equation}\label{correction}
\mathbf{u}^{(\ell+1)}=\mathbf{u}^{(\ell)}+PA_{\rm c}^{-1}P^{T}\big(\mathbf{f}-A\mathbf{u}^{(\ell)}\big),
\end{equation}
where $P\in\mathbb{R}^{n\times n_{\rm c}} \ (n_{\rm c}<n)$ is a prolongation (or interpolation) matrix of rank $n_{\rm c}$ and
\begin{equation}\label{Ac}
A_{\rm c}:=P^{T}AP
\end{equation}
is known as the \textit{Galerkin coarse-level matrix}. From~\eqref{correction}, we have
\begin{displaymath}
\mathbf{u}-\mathbf{u}^{(\ell+1)}=\big(I_{n}-PA_{\rm c}^{-1}P^{T}A\big)\big(\mathbf{u}-\mathbf{u}^{(\ell)}\big).
\end{displaymath}
Since $I_{n}-PA_{\rm c}^{-1}P^{T}A$ is an $A$-orthogonal projector along (or parallel to) the column space $\mathcal{R}(P)$ onto the null space $\mathcal{N}(P^{T}A)$, fast convergence will be achieved if $\mathcal{R}(P)$ can cover most of the low-frequency error.

A symmetric two-level scheme for solving~\eqref{system} is described by Algorithm~\ref{alg:TL}, in which the pre- and postsmoothing steps are performed symmetrically. In particular, if $n_{\rm s}=n$ and $S=I_{n}$, then Algorithm~\ref{alg:TL} will reduce to a \textit{two-grid} method, which is not our focus here, because the properties of two-grid and multigrid methods have been well studied in the literature; see, e.g.,~\cite{Falgout2004,Falgout2005,Zikatanov2008,MacLachlan2014,Notay2015,XZ2017,XXF2018,XXF2022-1,XXF2022-2,XXF2025}. Another special case is that $(S \,\ P)\in\mathbb{R}^{n\times(n_{\rm s}+n_{\rm c})}$ is square and nonsingular, in which case Algorithm~\ref{alg:TL} reduces to a \textit{two-level hierarchical basis} method~\cite{Braess1981,Axelsson1983,Yserentant1986,Bank1988}.

\begin{algorithm}[!htbp]

\caption{\ Two-level method.}\label{alg:TL}

\smallskip

\begin{algorithmic}[1]

\State Presmoothing: $\mathbf{u}^{(1)}\gets\mathbf{u}^{(0)}+SM_{\rm s}^{-1}S^{T}\big(\mathbf{f}-A\mathbf{u}^{(0)}\big)$

\smallskip

\State Restriction: $\mathbf{r}_{\rm c}\gets P^{T}\big(\mathbf{f}-A\mathbf{u}^{(1)}\big)$

\smallskip

\State Coarse-level correction: $\mathbf{e}_{\rm c}\gets A_{\rm c}^{-1}\mathbf{r}_{\rm c}$

\smallskip

\State Prolongation: $\mathbf{u}^{(2)}\gets\mathbf{u}^{(1)}+P\mathbf{e}_{\rm c}$

\smallskip

\State Postsmoothing: $\mathbf{u}_{\rm TL}\gets\mathbf{u}^{(2)}+SM_{\rm s}^{-T}S^{T}\big(\mathbf{f}-A\mathbf{u}^{(2)}\big)$

\smallskip

\end{algorithmic}

\end{algorithm}

It can be easily verified that a sufficient and necessary condition for the convergence factor of Algorithm~\ref{alg:TL} to be less than $1$ is
\begin{displaymath}
\rank(S \,\ P)=n,
\end{displaymath}
or, equivalently,
\begin{displaymath}
\mathcal{R}(S)+\mathcal{R}(P)=\mathbb{R}^{n},
\end{displaymath}
i.e., for any $\mathbf{v}\in\mathbb{R}^{n}$, there exist $\mathbf{v}_{\rm s}\in\mathbb{R}^{n_{\rm s}}$ and $\mathbf{v}_{\rm c}\in\mathbb{R}^{n_{\rm c}}$ such that $\mathbf{v}=S\mathbf{v}_{\rm s}+P\mathbf{v}_{\rm c}$. Under the above condition, an identity for characterizing the convergence factor of Algorithm~\ref{alg:TL} has been established in~\cite[Theorem~4.1]{Falgout2005} (see~\cite{XZ2002,Zikatanov2008} for an abstract version), despite it involves a `sup-inf' expression. In general, however, it is tough to get an optimal vector decomposition involved in the `sup-inf' expression. Motivated by this observation, we attempt to establish an easy-to-use convergence identity for Algorithm~\ref{alg:TL}.

In this paper, we present a concise characterization of the convergence factor of Algorithm~\ref{alg:TL}; see~\eqref{new-id}. In some cases, it is more convenient and tractable compared to the existing one. To illustrate this point, we provide several applications, which are briefly described as follows.
\begin{itemize}

\item Firstly, we rigorously prove that the two-level convergence factor decreases as the hierarchical space $\mathcal{R}(P)$ expands.

\item Secondly, we derive a class of optimal prolongation matrices for minimizing the two-level convergence factor. 

\item Finally, we develop a new convergence theory for a class of reduction-based two-level methods.

\end{itemize}

The rest of this paper is organized as follows. In Section~\ref{sec:pre}, we review the existing convergence identity for Algorithm~\ref{alg:TL} and introduce a class of reduction-based two-level methods. In Section~\ref{sec:main}, we present a concise characterization of the convergence factor of Algorithm~\ref{alg:TL}, followed by discussions on its applications. In Section~\ref{sec:con}, we make some concluding remarks.

\section{Preliminaries} \label{sec:pre}

For convenience, we start with some notation used in the subsequent discussions.

\begin{itemize}

\item[--] We write $S_{1}\succeq S_{2}$ (resp., $S_{1}\succ S_{2}$) if $S_{1}-S_{2}$ is positive semidefinite (resp., positive definite), provided that $S_{1}$ and $S_{2}$ are two real symmetric matrices of the same order.

\item[--] $I_{n}$ denotes the $n\times n$ identity matrix (or $I$ when its size is clear from context).

\item[--] $\mathcal{R}(\cdot)$ and $\mathcal{N}(\cdot)$ denote the column space (or range) and null space (or kernel) of a matrix, respectively.

\item[--] $\lambda_{\min}(\cdot)$ and $\lambda_{\max}(\cdot)$ denote the smallest and largest eigenvalues of a matrix, respectively.

\item[--] $\lambda_{i}(\cdot)$ denotes the $i$th smallest eigenvalue of a matrix.

\item[--] $\lambda(\cdot)$ denotes the spectrum of a matrix.

\item[--] $\|\cdot\|_{2}$ denotes the spectral norm of a matrix.

\item[--] $\|\cdot\|_{A}$ denotes the norm induced by SPD matrix $A\in\mathbb{R}^{n\times n}$: $\|\mathbf{v}\|_{A}=\sqrt{\mathbf{v}^{T}A\mathbf{v}}$ for any $\mathbf{v}\in\mathbb{R}^{n}$; $\|B\|_{A}=\max\limits_{\mathbf{v}\in\mathbb{R}^{n}\backslash\{0\}}\frac{\|B\mathbf{v}\|_{A}}{\|\mathbf{v}\|_{A}}$ for any $B\in\mathbb{R}^{n\times n}$.

\end{itemize}

\subsection{The existing convergence identity}

Several basic assumptions involved in the analysis of Algorithm~\ref{alg:TL} are summarized as follows.

\begin{itemize}

\item Let $S\in\mathbb{R}^{n\times n_{\rm s}} \ (n_{\rm s}\leq n)$ and $P\in\mathbb{R}^{n\times n_{\rm c}} \ (n_{\rm c}<n)$ be of full column rank. In particular, if $n_{\rm s}=n$, then $S$ will be simply chosen as $I_{n}$.

\item The columns of $(S \,\ P)\in\mathbb{R}^{n\times(n_{\rm s}+n_{\rm c})}$ can span the whole space $\mathbb{R}^{n}$, namely,
\begin{displaymath}
\mathcal{R}(S)+\mathcal{R}(P)=\mathbb{R}^{n},
\end{displaymath}
or, equivalently,
\begin{equation}\label{rank}
\rank(S \,\ P)=n.
\end{equation}

\item Let $M_{\rm s}\in\mathbb{R}^{n_{\rm s}\times n_{\rm s}}$ be a nonsingular matrix such that $M_{\rm s}+M_{\rm s}^{T}\succ A_{\rm s}$, where $A_{\rm s}:=S^{T}AS$.

\end{itemize}

Define
\begin{equation}\label{piA}
\Pi_{A}:=PA_{\rm c}^{-1}P^{T}A,
\end{equation}
where $A_{\rm c}$ is defined by~\eqref{Ac}. From Algorithm~\ref{alg:TL}, we have
\begin{equation}\label{propagation}
\mathbf{u}-\mathbf{u}_{\rm TL}=E_{\rm TL}\big(\mathbf{u}-\mathbf{u}^{(0)}\big),
\end{equation}
where $E_{\rm TL}$, called the \textit{iteration matrix} (or \textit{error propagation matrix}) of Algorithm~\ref{alg:TL}, is given by
\begin{equation}\label{ETL1}
E_{\rm TL}=\big(I-SM_{\rm s}^{-T}S^{T}A\big)(I-\Pi_{A})\big(I-SM_{\rm s}^{-1}S^{T}A\big).
\end{equation}
Define
\begin{displaymath}
\overline{M}_{\rm s}:=M_{\rm s}\big(M_{\rm s}+M_{\rm s}^{T}-A_{\rm s}\big)^{-1}M_{\rm s}^{T}.
\end{displaymath}
Then, $E_{\rm TL}$ can be expressed as
\begin{equation}\label{ETL2}
E_{\rm TL}=I-B_{\rm TL}^{-1}A,
\end{equation}
where
\begin{equation}\label{inv-BTL}
B_{\rm TL}^{-1}=S\overline{M}_{\rm s}^{-1}S^{T}+\big(I-SM_{\rm s}^{-T}S^{T}A\big)PA_{\rm c}^{-1}P^{T}\big(I-ASM_{\rm s}^{-1}S^{T}\big).
\end{equation}

\begin{remark}
We remark that the inverse $B_{\rm TL}^{-1}$ is well defined and $B_{\rm TL}^{-1}\succ 0$. In fact, we get from~\eqref{inv-BTL} that, for any $\mathbf{v}\in\mathbb{R}^{n}$,
\begin{displaymath}
\mathbf{v}^{T}B_{\rm TL}^{-1}\mathbf{v}=\underbrace{\mathbf{v}^{T}S\overline{M}_{\rm s}^{-1}S^{T}\mathbf{v}}_{\geq\,0}+\underbrace{\mathbf{v}^{T}\big(I-SM_{\rm s}^{-T}S^{T}A\big)PA_{\rm c}^{-1}P^{T}\big(I-ASM_{\rm s}^{-1}S^{T}\big)\mathbf{v}}_{\geq\,0}\geq 0,
\end{displaymath}
which yields $B_{\rm TL}^{-1}\succeq 0$. If $\mathbf{v}^{T}B_{\rm TL}^{-1}\mathbf{v}=0$, then
\begin{displaymath}
\mathbf{v}^{T}S\overline{M}_{\rm s}^{-1}S^{T}\mathbf{v}=0 \quad \text{and} \quad \mathbf{v}^{T}\big(I-SM_{\rm s}^{-T}S^{T}A\big)PA_{\rm c}^{-1}P^{T}\big(I-ASM_{\rm s}^{-1}S^{T}\big)\mathbf{v}=0,
\end{displaymath}
from which we deduce that $\mathbf{v}\in\mathcal{N}(S^{T})\cap\mathcal{N}(P^{T})$ and hence $\mathbf{v}\in\mathcal{N}\big((S \,\ P)^{T}\big)$. This, together with~\eqref{rank}, leads to $\mathbf{v}=0$. Thus, the positive definiteness of $B_{\rm TL}^{-1}$ is proved.
\end{remark}

According to~\eqref{propagation}, we deduce that
\begin{displaymath}
\|\mathbf{u}-\mathbf{u}_{\rm TL}\|_{A}\leq\|E_{\rm TL}\|_{A}\big\|\mathbf{u}-\mathbf{u}^{(0)}\big\|_{A}.
\end{displaymath}
In light of~\eqref{ETL1} and~\eqref{ETL2}, we have
\begin{displaymath}
I-A^{\frac{1}{2}}B_{\rm TL}^{-1}A^{\frac{1}{2}}=\big(I-A^{\frac{1}{2}}SM_{\rm s}^{-T}S^{T}A^{\frac{1}{2}}\big)\big(I-A^{\frac{1}{2}}\Pi_{A}A^{-\frac{1}{2}}\big)\big(I-A^{\frac{1}{2}}SM_{\rm s}^{-1}S^{T}A^{\frac{1}{2}}\big).
\end{displaymath}
Since $A^{\frac{1}{2}}\Pi_{A}A^{-\frac{1}{2}}$ is an $L^{2}$-orthogonal projector (so $I\succeq A^{\frac{1}{2}}\Pi_{A}A^{-\frac{1}{2}}$), it follows that
\begin{displaymath}
I-A^{\frac{1}{2}}B_{\rm TL}^{-1}A^{\frac{1}{2}}\succeq 0.
\end{displaymath}
Then
\begin{equation}\label{norm-ETL}
\|E_{\rm TL}\|_{A}=\big\|I-A^{\frac{1}{2}}B_{\rm TL}^{-1}A^{\frac{1}{2}}\big\|_{2}=\lambda_{\max}\big(I-A^{\frac{1}{2}}B_{\rm TL}^{-1}A^{\frac{1}{2}}\big)=1-\lambda_{\min}\big(B_{\rm TL}^{-1}A\big),
\end{equation}
which is referred to as the \textit{convergence factor} of Algorithm~\ref{alg:TL}.

The following theorem gives an identity for characterizing the convergence factor $\|E_{\rm TL}\|_{A}$~\cite[Theorem~4.1]{Falgout2005}; see~\cite{XZ2002,Zikatanov2008} for an abstract version.

\begin{theorem}\label{thm:old}
Let $\Pi_{A}$ be defined by~\eqref{piA}, and define
\begin{equation}\label{tildMs}
\widetilde{M}_{\rm s}:=M_{\rm s}^{T}\big(M_{\rm s}+M_{\rm s}^{T}-A_{\rm s}\big)^{-1}M_{\rm s}.
\end{equation}
Then, the convergence factor of Algorithm~{\rm\ref{alg:TL}} can be characterized as
\begin{equation}\label{old-id}
\|E_{\rm TL}\|_{A}=1-\frac{1}{K_{\rm TL}},
\end{equation}
where
\begin{equation}\label{KTL}
K_{\rm TL}=\sup_{\mathbf{v}\in\mathcal{R}(I-\Pi_{A})\backslash\{0\}}\,\inf_{\mathbf{v}_{\rm s}:\,\mathbf{v}=(I-\Pi_{A})S\mathbf{v}_{\rm s}}\frac{\mathbf{v}_{\rm s}^{T}\widetilde{M}_{\rm s}\mathbf{v}_{\rm s}}{\mathbf{v}^{T}A\mathbf{v}}.
\end{equation}
\end{theorem}

\begin{remark}
For a given $\mathbf{v}\in\mathcal{R}(I-\Pi_{A})\backslash\{0\}$, it is often tough to determine the set $\big\{\mathbf{v}_{\rm s}\in\mathbb{R}^{n_{\rm s}}:\mathbf{v}=(I-\Pi_{A})S\mathbf{v}_{\rm s}\big\}$, so the expression~\eqref{KTL} may limit the application of~\eqref{old-id}. For instance, it is difficult to derive a prolongation matrix for minimizing $K_{\rm TL}$ (or, equivalently, $\|E_{\rm TL}\|_{A}$), except for the special two-grid case where $n_{\rm s}=n$ and $S=I_{n}$ (see~\cite{XZ2017,Brannick2018}). This motivates us to establish an easy-to-use convergence identity for Algorithm~\ref{alg:TL}.
\end{remark}

\subsection{Reduction-based two-level methods} \label{subsec:TLr}

As proved in Theorem~\ref{thm:main}, the convergence factor $\|E_{\rm TL}\|_{A}$ can be expressed as
\begin{displaymath}
\|E_{\rm TL}\|_{A}=1-\lambda_{n_{\rm s}+n_{\rm c}-n+1}\big(\widetilde{M}_{\rm s}^{-1}S^{T}A(I-\Pi_{A})S\big),
\end{displaymath}
in which $S^{T}A(I-\Pi_{A})S\succeq 0$ is a Schur complement of the matrix
\begin{equation}\label{A-hat}
\widehat{A}:=\begin{pmatrix}
S^{T} \\
P^{T}
\end{pmatrix}A\big(S \,\ P\big)=\begin{pmatrix}
A_{\rm s} & S^{T}AP \\
P^{T}AS & A_{\rm c}
\end{pmatrix}.
\end{equation}
Since
\begin{displaymath}
\lambda\big(\widetilde{M}_{\rm s}^{-1}S^{T}A(I-\Pi_{A})S\big)=\lambda\big(\widetilde{M}_{\rm s}^{-\frac{1}{2}}S^{T}A(I-\Pi_{A})S\widetilde{M}_{\rm s}^{-\frac{1}{2}}\big)
\end{displaymath}
and
\begin{displaymath}
\widetilde{M}_{\rm s}^{-\frac{1}{2}}A_{\rm s}\widetilde{M}_{\rm s}^{-\frac{1}{2}}\succeq\widetilde{M}_{\rm s}^{-\frac{1}{2}}S^{T}A(I-\Pi_{A})S\widetilde{M}_{\rm s}^{-\frac{1}{2}},
\end{displaymath}
we obtain
\begin{displaymath}
\|E_{\rm TL}\|_{A}\geq 1-\lambda_{n_{\rm s}+n_{\rm c}-n+1}\big(\widetilde{M}_{\rm s}^{-\frac{1}{2}}A_{\rm s}\widetilde{M}_{\rm s}^{-\frac{1}{2}}\big)=1-\lambda_{n_{\rm s}+n_{\rm c}-n+1}\big(\widetilde{M}_{\rm s}^{-1}A_{\rm s}\big).
\end{displaymath}
To attain the lower bound $1-\lambda_{n_{\rm s}+n_{\rm c}-n+1}\big(\widetilde{M}_{\rm s}^{-1}A_{\rm s}\big)$, it suffices to set
\begin{equation}\label{SAP}
S^{T}AP=0,
\end{equation}
that is, the hierarchical spaces $\mathcal{R}(S)$ and $\mathcal{R}(P)$ are orthogonal with respect to the $A$-inner product. If, in addition, $M_{\rm s}=A_{\rm s}$, then the convergence factor is exactly zero. This theoretical result can serve as a motivation for developing some practical algorithms.

Let $A$ be partitioned into the two-by-two block form
\begin{equation}\label{two-by-two}
A=\begin{pmatrix}
A_{\rm ff} & A_{\rm fc} \\
A_{\rm cf} & A_{\rm cc}
\end{pmatrix},
\end{equation}
where $A_{\rm ff}\in\mathbb{R}^{n_{\rm f}\times n_{\rm f}}$, $A_{\rm fc}\in\mathbb{R}^{n_{\rm f}\times n_{\rm c}}$, $A_{\rm cf}=A_{\rm fc}^{T}$, $A_{\rm cc}\in\mathbb{R}^{n_{\rm c}\times n_{\rm c}}$, and $n_{\rm f}+n_{\rm c}=n$. Take
\begin{displaymath}
S=\begin{pmatrix}
I_{n_{\rm f}} \\
0
\end{pmatrix} \quad \text{and} \quad P=\begin{pmatrix}
W_{\rm fc} \\
I_{n_{\rm c}}
\end{pmatrix},
\end{displaymath}
where $W_{\rm fc}\in\mathbb{R}^{n_{\rm f}\times n_{\rm c}}$. Then, the relation~\eqref{SAP} yields
\begin{displaymath}
W_{\rm fc}=-A_{\rm ff}^{-1}A_{\rm fc}.
\end{displaymath}
The resulting prolongation matrix, denoted by $P_{\star}$, is of the form
\begin{displaymath}
P_{\star}=\begin{pmatrix}
-A_{\rm ff}^{-1}A_{\rm fc} \\
I_{n_{\rm c}}
\end{pmatrix},
\end{displaymath}
which is commonly called an \textit{ideal} prolongation matrix~\cite{Falgout2004,XXF2018}. In Algorithm~\ref{alg:TL}, if
\begin{displaymath}
S=\begin{pmatrix}
I_{n_{\rm f}} \\
0
\end{pmatrix}, \quad M_{\rm s}=A_{\rm ff}, \quad \text{and} \quad P=P_{\star},
\end{displaymath}
then the corresponding convergence factor is zero. However, from a computational point of view, such an algorithm may not be practical, because it is often too costly to compute $A_{\rm ff}^{-1}$ directly and $A_{\rm ff}^{-1}$ is generally dense. Hence, a sparse approximation to $A_{\rm ff}^{-1}$ is needed to design a practical algorithm. Furthermore, the convergence of the resulting algorithm is expected to be guaranteed theoretically.

An approximation algorithm was proposed and studied in~\cite{MacLachlan2006}. Since the convergence factor of Algorithm~\ref{alg:TL} is the square of that of Algorithm~\ref{alg:TL} without postsmoothing, the convergence estimates in~\cite[Theorem~1 and Corollary~1]{MacLachlan2006} can be formalized as the following theorem.

\begin{theorem}\label{thm:TLr0}
Let $A$ be partitioned as in~\eqref{two-by-two}, and let $D_{\rm ff}\in\mathbb{R}^{n_{\rm f}\times n_{\rm f}}$ be an SPD matrix such that
\begin{displaymath}
A_{\rm ff}\succeq D_{\rm ff}\succeq \frac{1}{1+\varepsilon}A_{\rm ff} \quad \text{and} \quad \begin{pmatrix}
D_{\rm ff} & A_{\rm fc} \\
A_{\rm cf} & A_{\rm cc}
\end{pmatrix}\succeq 0,
\end{displaymath}
where $\varepsilon>0$ is a parameter. Take
\begin{displaymath}
S=\begin{pmatrix}
I_{n_{\rm f}} \\
0
\end{pmatrix}, \quad M_{\rm s}=\bigg(1+\frac{\varepsilon}{2}\bigg)D_{\rm ff}, \quad \text{and} \quad P=\begin{pmatrix}
-D_{\rm ff}^{-1}A_{\rm fc} \\
I_{n_{\rm c}}
\end{pmatrix}.
\end{displaymath}
Then, the convergence factor of Algorithm~{\rm\ref{alg:TL}} satisfies
\begin{equation}\label{old-TLr1}
\|E_{\rm TL}\|_{A}\leq\frac{\varepsilon}{1+\varepsilon}\bigg(1+\frac{\varepsilon}{(2+\varepsilon)^{2}}\bigg).
\end{equation}
More generally, if the pre- and postsmoothing steps in Algorithm~{\rm\ref{alg:TL}} are carried out $\nu$ times iteratively, then
\begin{equation}\label{old-TLr2}
\|E_{\rm TL}\|_{A}\leq\frac{\varepsilon}{1+\varepsilon}\bigg(1+\frac{\varepsilon^{2\nu-1}}{(2+\varepsilon)^{2\nu}}\bigg).
\end{equation}
\end{theorem}

\begin{remark}
The convergence estimates \eqref{old-TLr1} and~\eqref{old-TLr2} can be improved by using a new characterization of $\|E_{\rm TL}\|_{A}$, which will be discussed in the next section.
\end{remark}

\section{A new characterization of $\|E_{\rm TL}\|_{A}$ and its applications} \label{sec:main}

\subsection{The new characterization}

We first give a new characterization of $\|E_{\rm TL}\|_{A}$, which plays a fundamental role in the subsequent analysis.

\begin{theorem}\label{thm:main}
Under the assumptions of Algorithm~{\rm\ref{alg:TL}}, it holds that
\begin{equation}\label{new-id}
\|E_{\rm TL}\|_{A}=1-\lambda_{n_{\rm s}+n_{\rm c}-n+1}\big(\widetilde{M}_{\rm s}^{-1}S^{T}A(I-\Pi_{A})S\big),
\end{equation}
where $\Pi_{A}$ and $\widetilde{M}_{\rm s}$ are defined by~\eqref{piA} and~\eqref{tildMs}, respectively.
\end{theorem}

\begin{proof}
From~\eqref{ETL1} and~\eqref{ETL2}, we have
\begin{displaymath}
B_{\rm TL}^{-1}A=I-\big(I-SM_{\rm s}^{-T}S^{T}A\big)(I-\Pi_{A})\big(I-SM_{\rm s}^{-1}S^{T}A\big).
\end{displaymath}
Then
\begin{align*}
\lambda\big(B_{\rm TL}^{-1}A\big)&=\lambda\big(I-\big(I-SM_{\rm s}^{-T}S^{T}A\big)(I-\Pi_{A})\big(I-SM_{\rm s}^{-1}S^{T}A\big)\big)\\
&=\lambda\big(I-\big(I-SM_{\rm s}^{-1}S^{T}A\big)\big(I-SM_{\rm s}^{-T}S^{T}A\big)(I-\Pi_{A})\big)\\
&=\lambda\big(I-\big(I-S\widetilde{M}_{\rm s}^{-1}S^{T}A\big)(I-\Pi_{A})\big)\\
&=\lambda\big(S\widetilde{M}_{\rm s}^{-1}S^{T}A(I-\Pi_{A})+\Pi_{A}\big).
\end{align*}
Since $\Pi_{A}$ is a projector (i.e., $\Pi_{A}^{2}=\Pi_{A}$) and $\rank(\Pi_{A})=n_{\rm c}$, there exists a nonsingular matrix $X\in\mathbb{R}^{n\times n}$ such that
\begin{displaymath}
\Pi_{A}=X^{-1}\begin{pmatrix}
I_{n_{\rm c}} & 0 \\
0 & 0
\end{pmatrix}X.
\end{displaymath}
Let
\begin{displaymath}
S\widetilde{M}_{\rm s}^{-1}S^{T}A=X^{-1}\begin{pmatrix}
Y_{11} & Y_{12} \\
Y_{21} & Y_{22}
\end{pmatrix}X,
\end{displaymath}
where $Y_{11}\in\mathbb{R}^{n_{\rm c}\times n_{\rm c}}$, $Y_{12}\in\mathbb{R}^{n_{\rm c}\times(n-n_{\rm c})}$, $Y_{21}\in\mathbb{R}^{(n-n_{\rm c})\times n_{\rm c}}$, and $Y_{22}\in\mathbb{R}^{(n-n_{\rm c})\times(n-n_{\rm c})}$. Direct computations yield
\begin{align}
S\widetilde{M}_{\rm s}^{-1}S^{T}A(I-\Pi_{A})&=X^{-1}\begin{pmatrix}
0 & Y_{12} \\
0 & Y_{22}
\end{pmatrix}X,\label{sim-1}\\
S\widetilde{M}_{\rm s}^{-1}S^{T}A(I-\Pi_{A})+\Pi_{A}&=X^{-1}\begin{pmatrix}
I_{n_{\rm c}} & Y_{12} \\
0 & Y_{22}
\end{pmatrix}X.\label{sim-2}
\end{align}
Recall that
\begin{displaymath}
I\succeq A^{\frac{1}{2}}B_{\rm TL}^{-1}A^{\frac{1}{2}}\succ 0.
\end{displaymath}
We then have
\begin{displaymath}
\lambda\big(S\widetilde{M}_{\rm s}^{-1}S^{T}A(I-\Pi_{A})+\Pi_{A}\big)=\lambda\big(B_{\rm TL}^{-1}A\big)=\lambda\big(A^{\frac{1}{2}}B_{\rm TL}^{-1}A^{\frac{1}{2}}\big)\subset(0,1],
\end{displaymath}
which, combined with~\eqref{sim-2}, leads to
\begin{displaymath}
\lambda(Y_{22})\subset(0,1].
\end{displaymath}
By~\eqref{sim-1} and~\eqref{sim-2}, we have
\begin{align*}
\lambda_{\min}\big(B_{\rm TL}^{-1}A\big)&=\lambda_{\min}\big(S\widetilde{M}_{\rm s}^{-1}S^{T}A(I-\Pi_{A})+\Pi_{A}\big)\\
&=\lambda_{\min}(Y_{22})\\
&=\lambda_{n_{\rm c}+1}\big(S\widetilde{M}_{\rm s}^{-1}S^{T}A(I-\Pi_{A})\big),
\end{align*}
which, together with the fact
\begin{displaymath}
\lambda\big(S\widetilde{M}_{\rm s}^{-1}S^{T}A(I-\Pi_{A})\big)=\{\underbrace{0,\ldots,0}_{n-n_{\rm s}}\}\cup\lambda\big(\widetilde{M}_{\rm s}^{-1}S^{T}A(I-\Pi_{A})S\big),
\end{displaymath}
yields
\begin{displaymath}
\lambda_{\min}\big(B_{\rm TL}^{-1}A\big)=\lambda_{n_{\rm s}+n_{\rm c}-n+1}\big(\widetilde{M}_{\rm s}^{-1}S^{T}A(I-\Pi_{A})S\big).
\end{displaymath}
The identity~\eqref{new-id} then follows immediately by using~\eqref{norm-ETL}.
\end{proof}

\begin{remark}
In fact, the quantity $n_{\rm s}+n_{\rm c}-n$ involved in~\eqref{new-id} is the dimension of $\mathcal{R}(S)\cap\mathcal{R}(P)$, because
\begin{displaymath}
\dim\big(\mathcal{R}(S)\big)+\dim\big(\mathcal{R}(P)\big)=\dim\big(\mathcal{R}(S)+\mathcal{R}(P)\big)+\dim\big(\mathcal{R}(S)\cap\mathcal{R}(P)\big),
\end{displaymath}
where $\dim(\cdot)$ denotes the dimension of a subspace of $\mathbb{R}^{n}$.
\end{remark}

\begin{remark}
In view of~\eqref{A-hat}, we define
\begin{displaymath}
\gamma:=\max_{\substack{\mathbf{v}_{\rm s}\in\mathbb{R}^{n_{\rm s}}\backslash\{0\} \\ \mathbf{v}_{\rm c}\in\mathbb{R}^{n_{\rm c}}\backslash\{0\}}}\frac{\mathbf{v}_{\rm s}^{T}S^{T}AP\mathbf{v}_{\rm c}}{\sqrt{\mathbf{v}_{\rm s}^{T}A_{\rm s}\mathbf{v}_{\rm s}\cdot\mathbf{v}_{\rm c}^{T}A_{\rm c}\mathbf{v}_{\rm c}}},
\end{displaymath}
which can be equivalently expressed as
\begin{equation}\label{CBS}
\gamma=\big\|A_{\rm s}^{-\frac{1}{2}}S^{T}APA_{\rm c}^{-\frac{1}{2}}\big\|_{2}=\sqrt{\lambda_{\max}\big(A_{\rm s}^{-\frac{1}{2}}S^{T}APA_{\rm c}^{-1}P^{T}ASA_{\rm s}^{-\frac{1}{2}}\big)}.
\end{equation}
The assumption~\eqref{rank} entails that 
\begin{displaymath}
n_{\rm s}+n_{\rm c}\geq n.
\end{displaymath}
Observe from~\eqref{A-hat} that $\widehat{A}\succeq 0$, and $\widehat{A}\succ 0$ if and only if $n_{\rm s}+n_{\rm c}=n$, i.e., $(S \,\ P)$ is square and nonsingular.
\begin{itemize}

\item If $n_{\rm s}+n_{\rm c}>n$, then $\widehat{A}\succeq 0$ is singular, which leads to the positive semidefiniteness and singularity of the Schur complement $A_{\rm s}-S^{T}APA_{\rm c}^{-1}P^{T}AS$. Then
\begin{displaymath}
\lambda_{\min}\big(I_{n_{\rm s}}-A_{\rm s}^{-\frac{1}{2}}S^{T}APA_{\rm c}^{-1}P^{T}ASA_{\rm s}^{-\frac{1}{2}}\big)=0
\end{displaymath}
and hence
\begin{displaymath}
\gamma=\sqrt{\lambda_{\max}\big(A_{\rm s}^{-\frac{1}{2}}S^{T}APA_{\rm c}^{-1}P^{T}ASA_{\rm s}^{-\frac{1}{2}}\big)}=1.
\end{displaymath}

\item If $n_{\rm s}+n_{\rm c}=n$, then $\gamma\in[0,1)$, which is the so-called \textit{Cauchy--Bunyakowski--Schwarz} (C.B.S.) constant; see, e.g.,~\cite{Axelsson1994,Axelsson2003}. In such a case, $\gamma$ can be viewed as the cosine of the abstract angle between $\mathcal{R}(S)$ and $\mathcal{R}(P)$ with respect to the $A$-inner product.

\end{itemize}
From~\eqref{CBS}, we deduce that
\begin{displaymath}
\gamma^{2}A_{\rm s}\succeq S^{T}A\Pi_{A}S.
\end{displaymath}
Then
\begin{equation}\label{SApi-low1}
S^{T}A(I-\Pi_{A})S\succeq(1-\gamma^{2})A_{\rm s},
\end{equation}
which, combined with~\eqref{new-id}, gives
\begin{displaymath}
\|E_{\rm TL}\|_{A}\leq 1-(1-\gamma^{2})\lambda_{n_{\rm s}+n_{\rm c}-n+1}\big(\widetilde{M}_{\rm s}^{-1}A_{\rm s}\big).
\end{displaymath}
Note that the above estimate is of interest only when $n_{\rm s}+n_{\rm c}=n$, because $\gamma=1$ if $n_{\rm s}+n_{\rm c}>n$. In the case $n_{\rm s}+n_{\rm c}=n$, the above estimate reduces to
\begin{equation}\label{ETG-up}
\|E_{\rm TL}\|_{A}\leq 1-(1-\gamma^{2})\lambda_{\min}\big(\widetilde{M}_{\rm s}^{-1}A_{\rm s}\big).
\end{equation}
Further, if $M_{\rm s}=A_{\rm s}$, then the upper bound in~\eqref{ETG-up} will become $\gamma^{2}$, and~\eqref{ETG-up} will become an equality. Indeed,
\begin{align*}
\|E_{\rm TL}\|_{A}&=1-\lambda_{\min}\big(A_{\rm s}^{-1}S^{T}A(I-\Pi_{A})S\big)\\
&=1-\lambda_{\min}\big(I_{n_{\rm s}}-A_{\rm s}^{-1}S^{T}APA_{\rm c}^{-1}P^{T}AS\big)\\
&=\lambda_{\max}\big(A_{\rm s}^{-1}S^{T}APA_{\rm c}^{-1}P^{T}AS\big)\\
&=\gamma^{2}.
\end{align*}
\end{remark}

From the proof of Theorem~\ref{thm:main}, we can get a description of the spectrum $\lambda(E_{\rm TL})$, as stated in the following corollary.

\begin{corollary}
Under the same assumptions as in Theorem~{\rm\ref{thm:main}}, one has
\begin{displaymath}
\lambda(E_{\rm TL})=\{\underbrace{0,\ldots,0}_{n_{\rm c}},1-\nu_{1},\ldots,1-\nu_{n-n_{\rm c}}\},
\end{displaymath}
where $\{\nu_{i}\}_{i=1}^{n-n_{\rm c}}$ are the positive eigenvalues of $\widetilde{M}_{\rm s}^{-1}S^{T}A(I-\Pi_{A})S$.
\end{corollary}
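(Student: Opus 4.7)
The plan is to piggyback on the block-triangular decomposition constructed in the proof of Theorem~\ref{thm:exact}. Recall that there the matrix $S\widetilde{M}_{\rm s}^{-1}S^{T}A(I-\varPi_{A})+\varPi_{A}$ is brought into the similar form
\begin{equation*}
Y^{-1}\begin{pmatrix} I_{n_{\rm c}} & Z_{12} \\ 0 & Z_{22} \end{pmatrix}Y,
\end{equation*}
while $S\widetilde{M}_{\rm s}^{-1}S^{T}A(I-\varPi_{A})$ itself is similar to the block-triangular matrix with diagonal blocks $0_{n_{\rm c}}$ and $Z_{22}$. Since $B_{\rm TL}^{-1}A=S\widetilde{M}_{\rm s}^{-1}S^{T}A(I-\varPi_{A})+\varPi_{A}$, the first step is simply to read off
\begin{equation*}
\lambda\big(B_{\rm TL}^{-1}A\big)=\{\underbrace{1,\ldots,1}_{n_{\rm c}}\}\cup\lambda(Z_{22}),
\end{equation*}
and to identify $\lambda(Z_{22})$ with the positive eigenvalues $\nu_{1},\ldots,\nu_{n-n_{\rm c}}$ of $S\widetilde{M}_{\rm s}^{-1}S^{T}A(I-\varPi_{A})$, by comparing with~\eqref{spectrum1}.

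The second step is to translate this into the spectrum of $E_{\rm TL}$ via the relation $E_{\rm TL}=I-B_{\rm TL}^{-1}A$ from~\eqref{E-TL2}. Each eigenvalue $1$ of $B_{\rm TL}^{-1}A$ contributes a zero eigenvalue of $E_{\rm TL}$, and each $\nu_{i}$ contributes an eigenvalue $1-\nu_{i}$, which gives exactly~\eqref{eigs}. Finally, to justify replacing the positive eigenvalues of $S\widetilde{M}_{\rm s}^{-1}S^{T}A(I-\varPi_{A})$ by those of $\widetilde{M}_{\rm s}^{-1}S^{T}A(I-\varPi_{A})S$ as asserted in the corollary, one invokes the standard fact that for any matrices $U,V$ the products $UV$ and $VU$ share the same nonzero spectrum; this yields~\eqref{spectrum2} from~\eqref{spectrum1}, and the multiplicity of $0$ simply shifts from $n_{\rm c}$ to $n_{\rm s}+n_{\rm c}-n$, which is harmless since we only enumerate the positive $\nu_i$'s.

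There is no real obstacle here: the corollary is essentially a bookkeeping consequence of the similarity transformation already exhibited in the proof of Theorem~\ref{thm:exact}. The only point that warrants explicit mention is the $UV$/$VU$ spectrum argument, so that~\eqref{spectrum2} is reconciled with~\eqref{spectrum1} and the $\nu_i$'s are unambiguously characterized as the positive eigenvalues of $\widetilde{M}_{\rm s}^{-1}S^{T}A(I-\varPi_{A})S$. Given how directly the block form $\mathrm{diag}(I_{n_{\rm c}},Z_{22})+\text{strictly upper}$ yields the spectrum, the whole argument can be given in a few lines.
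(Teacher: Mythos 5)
Your argument is correct and follows essentially the same route as the paper: the corollary is read off from the block-triangular similarity already exhibited in the proof of Theorem~\ref{thm:exact}, combined with $E_{\rm TL}=I-B_{\rm TL}^{-1}A$ and the standard fact that $UV$ and $VU$ share the same nonzero spectrum to reconcile \eqref{spectrum1} with \eqref{spectrum2}. One slip worth fixing: $B_{\rm TL}^{-1}A$ is \emph{not} equal to $S\widetilde{M}_{\rm s}^{-1}S^{T}A(I-\varPi_{A})+\varPi_{A}$ as a matrix identity; the proof of Theorem~\ref{thm:exact} only shows that the two matrices have the same spectrum (via cyclic permutation of the factors of the error propagation operator), which is all you actually use, so the conclusion stands once you replace the matrix equality by equality of spectra.
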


\begin{proof}
By~\eqref{sim-2}, we have
\begin{displaymath}
\lambda\big(B_{\rm TL}^{-1}A\big)=\lambda\big(S\widetilde{M}_{\rm s}^{-1}S^{T}A(I-\Pi_{A})+\Pi_{A}\big)=\{\underbrace{1,\ldots,1}_{n_{\rm c}}\}\cup\lambda(Y_{22}),
\end{displaymath}
where $\lambda(Y_{22})\subset(0,1]$. Hence,
\begin{displaymath}
\lambda(E_{\rm TL})=\lambda\big(I-B_{\rm TL}^{-1}A\big)=\{\underbrace{0,\ldots,0}_{n_{\rm c}}\}\cup\lambda(I_{n-n_{\rm c}}-Y_{22}).
\end{displaymath}
The desired result then follows from~\eqref{sim-1} and the basic fact that $S\widetilde{M}_{\rm s}^{-1}S^{T}A(I-\Pi_{A})$ and $\widetilde{M}_{\rm s}^{-1}S^{T}A(I-\Pi_{A})S$ have the same nonzero eigenvalues.
\end{proof}

The identity~\eqref{new-id} is more convenient for analyzing two-level methods compared to~\eqref{old-id}. To illustrate its usability, we next show three applications of~\eqref{new-id}.

\subsection{The first application}

The first application of~\eqref{new-id} is to study the influence of $\mathcal{R}(P)$ on $\|E_{\rm TL}\|_{A}$.

The following lemma gives a simple but useful eigenvalue inequality; see, e.g.,~\cite[Corollary~4.3.5]{Horn2013}.

\begin{lemma}\label{lem:eig-ineq}
Let $H_{1}$ and $H_{2}$ be Hermitian matrices of order $n$. If $H_{2}$ is singular, then
\begin{displaymath}
\lambda_{i}(H_{1}+H_{2})\leq\lambda_{i+\rank(H_{2})}(H_{1})
\end{displaymath}
for all $i=1,\ldots,n-\rank(H_{2})$.
\end{lemma}

Using~\eqref{new-id} and Lemma~\ref{lem:eig-ineq}, we can prove that the two-level convergence factor decreases as the hierarchical space $\mathcal{R}(P)$ expands, as described below.

\begin{theorem}
Assume that $\widehat{P}\in\mathbb{R}^{n\times\hat{n}_{\rm c}} \ (n_{\rm c}\leq\hat{n}_{\rm c}<n)$ is of full column rank and $\rank(S \ \widehat{P})=n$. Define
\begin{displaymath}
\widehat{\Pi}_{A}:=\widehat{P}(\widehat{P}^{T}A\widehat{P})^{-1}\widehat{P}^{T}A.
\end{displaymath}
If $\mathcal{R}(P)\subseteq\mathcal{R}(\widehat{P})$, then
\begin{displaymath}
\|\widehat{E}_{\rm TL}\|_{A}\leq\|E_{\rm TL}\|_{A},
\end{displaymath}
where
\begin{equation}\label{hat-ETL}
\|\widehat{E}_{\rm TL}\|_{A}=1-\lambda_{n_{\rm s}+\hat{n}_{\rm c}-n+1}\big(\widetilde{M}_{\rm s}^{-1}S^{T}A(I-\widehat{\Pi}_{A})S\big)
\end{equation}
and $\|E_{\rm TL}\|_{A}$ is given by~\eqref{new-id}.
\end{theorem}

\begin{proof}
Since $\mathcal{R}(P)\subseteq\mathcal{R}(\widehat{P})$, there exists an $\hat{n}_{\rm c}\times n_{\rm c}$ matrix $Z$ such that
\begin{displaymath}
P=\widehat{P}Z,
\end{displaymath}
from which we can deduce that $Z$ is of full column rank. Furthermore, there exists a nonsingular matrix $\widehat{Z}\in\mathbb{R}^{\hat{n}_{\rm c}\times\hat{n}_{\rm c}}$ such that
\begin{displaymath}
Z=\widehat{Z}\begin{pmatrix}
I_{n_{\rm c}} \\
0
\end{pmatrix},
\end{displaymath}
which yields
\begin{displaymath}
P=\widehat{P}\widehat{Z}\begin{pmatrix}
I_{n_{\rm c}} \\
0
\end{pmatrix}.
\end{displaymath}
Hence,
\begin{displaymath}
\widehat{P}=(P \,\ Q)\widehat{Z}^{-1}
\end{displaymath}
for some $Q\in\mathbb{R}^{n\times(\hat{n}_{\rm c}-n_{\rm c})}$, and~\eqref{hat-ETL} can be equivalently expressed as
\begin{displaymath}
\|\widehat{E}_{\rm TL}\|_{A}=1-\lambda_{n_{\rm s}+\hat{n}_{\rm c}-n+1}\big(\widetilde{M}_{\rm s}^{-1}S^{T}A\big(I-\widehat{P}_{0}(\widehat{P}_{0}^{T}A\widehat{P}_{0})^{-1}\widehat{P}_{0}^{T}A\big)S\big)
\end{displaymath}
with $\widehat{P}_{0}=(P \,\ Q)$.

Let
\begin{displaymath}
F=\widetilde{M}_{\rm s}^{-\frac{1}{2}}S^{T}A(I-\Pi_{A})S\widetilde{M}_{\rm s}^{-\frac{1}{2}}-\widetilde{M}_{\rm s}^{-\frac{1}{2}}S^{T}A\big(I-\widehat{P}_{0}(\widehat{P}_{0}^{T}A\widehat{P}_{0})^{-1}\widehat{P}_{0}^{T}A\big)S\widetilde{M}_{\rm s}^{-\frac{1}{2}}.
\end{displaymath}
Then
\begin{align*}
F&=\widetilde{M}_{\rm s}^{-\frac{1}{2}}S^{T}A\big(\widehat{P}_{0}(\widehat{P}_{0}^{T}A\widehat{P}_{0})^{-1}\widehat{P}_{0}^{T}-PA_{\rm c}^{-1}P^{T}\big)AS\widetilde{M}_{\rm s}^{-\frac{1}{2}}\\
&=\widetilde{M}_{\rm s}^{-\frac{1}{2}}S^{T}A\bigg(\widehat{P}_{0}(\widehat{P}_{0}^{T}A\widehat{P}_{0})^{-1}\widehat{P}_{0}^{T}-\widehat{P}_{0}\begin{pmatrix}
A_{\rm c}^{-1} & 0 \\
0 & 0
\end{pmatrix}\widehat{P}_{0}^{T}\bigg)AS\widetilde{M}_{\rm s}^{-\frac{1}{2}}\\
&=\widetilde{M}_{\rm s}^{-\frac{1}{2}}S^{T}A\widehat{P}_{0}\bigg(\underbrace{(\widehat{P}_{0}^{T}A\widehat{P}_{0})^{-1}-\begin{pmatrix}
A_{\rm c}^{-1} & 0 \\
0 & 0
\end{pmatrix}}_{=:F_{0}}\bigg)\widehat{P}_{0}^{T}AS\widetilde{M}_{\rm s}^{-\frac{1}{2}}.
\end{align*}
Direct computation yields
\begin{displaymath}
F_{0}=\begin{pmatrix}
-A_{\rm c}^{-1}P^{T}AQ \\
I_{\hat{n}_{\rm c}-n_{\rm c}}
\end{pmatrix}\big(Q^{T}AQ-Q^{T}APA_{\rm c}^{-1}P^{T}AQ\big)^{-1}\begin{pmatrix}
-Q^{T}APA_{\rm c}^{-1} & I_{\hat{n}_{\rm c}-n_{\rm c}}
\end{pmatrix}.
\end{displaymath}
Thus,
\begin{displaymath}
\rank(F)\leq\rank(F_{0})=\hat{n}_{\rm c}-n_{\rm c}.
\end{displaymath}
Obviously, $F\in\mathbb{R}^{n_{\rm s}\times n_{\rm s}}$ is singular, because
\begin{displaymath}
\rank(F)\leq\hat{n}_{\rm c}-n_{\rm c}<n-n_{\rm c}\leq n_{\rm s}.
\end{displaymath}
Using~\eqref{new-id} and Lemma~\ref{lem:eig-ineq}, we obtain
\begin{align*}
\|E_{\rm TL}\|_{A}&=1-\lambda_{n_{\rm s}+n_{\rm c}-n+1}\big(\widetilde{M}_{\rm s}^{-\frac{1}{2}}S^{T}A(I-\Pi_{A})S\widetilde{M}_{\rm s}^{-\frac{1}{2}}\big)\\
&=1-\lambda_{n_{\rm s}+n_{\rm c}-n+1}\big(\widetilde{M}_{\rm s}^{-\frac{1}{2}}S^{T}A\big(I-\widehat{P}_{0}(\widehat{P}_{0}^{T}A\widehat{P}_{0})^{-1}\widehat{P}_{0}^{T}A\big)S\widetilde{M}_{\rm s}^{-\frac{1}{2}}+F\big)\\
&\geq 1-\lambda_{n_{\rm s}+n_{\rm c}-n+1+\rank(F)}\big(\widetilde{M}_{\rm s}^{-\frac{1}{2}}S^{T}A\big(I-\widehat{P}_{0}(\widehat{P}_{0}^{T}A\widehat{P}_{0})^{-1}\widehat{P}_{0}^{T}A\big)S\widetilde{M}_{\rm s}^{-\frac{1}{2}}\big)\\
&\geq 1-\lambda_{n_{\rm s}+\hat{n}_{\rm c}-n+1}\big(\widetilde{M}_{\rm s}^{-\frac{1}{2}}S^{T}A\big(I-\widehat{P}_{0}(\widehat{P}_{0}^{T}A\widehat{P}_{0})^{-1}\widehat{P}_{0}^{T}A\big)S\widetilde{M}_{\rm s}^{-\frac{1}{2}}\big)\\
&=\|\widehat{E}_{\rm TL}\|_{A},
\end{align*}
which gives the desired result.
\end{proof}

\subsection{The second application}

The second application of~\eqref{new-id} is to find an optimal prolongation matrix for minimizing the convergence factor $\|E_{\rm TL}\|_{A}$, provided that $S$ and $M_{\rm s}$ are preselected.

To study the optimality of prolongation matrix, we need the following inequality, which is known as the \textit{Poincar\'{e} separation theorem}; see, e.g.,~\cite[Corollary~4.3.37]{Horn2013}.

\begin{lemma}\label{lem:Poincare}
Let $H\in\mathbb{C}^{n\times n}$ be Hermitian, and let $\{\mathbf{q}_{k}\}_{k=1}^{m}\subset\mathbb{C}^{n}\,(1\leq m\leq n)$ be a set of orthonormal vectors. Then, for any $i=1,\ldots,m$, it holds that
\begin{displaymath}
\lambda_{i}(H)\leq\lambda_{i}(\check{H})\leq\lambda_{i+n-m}(H),
\end{displaymath}
where $\check{H}=\big(\mathbf{q}_{i}^{\ast}H\mathbf{q}_{j}\big)\in\mathbb{C}^{m\times m}$ and $(\cdot)^{\ast}$ denotes the conjugate transpose of a vector.
\end{lemma}

Based on~\eqref{new-id} and the above lemma, we can derive a class of optimal prolongation matrices, as described in the following theorem.

\begin{theorem}\label{thm:optimal}
Let $\{(\mu_{i},\mathbf{v}_{i})\}_{i=1}^{n}$ be the eigenpairs of $S\widetilde{M}_{\rm s}^{-1}S^{T}A$, namely,
\begin{displaymath}
S\widetilde{M}_{\rm s}^{-1}S^{T}A\mathbf{v}_{i}=\mu_{i}\mathbf{v}_{i},
\end{displaymath}
where
\begin{displaymath}
0\leq\mu_{1}\leq\mu_{2}\leq\cdots\leq\mu_{n}\leq 1 \quad \text{and} \quad \mathbf{v}_{i}^{T}A\mathbf{v}_{j}=\begin{cases}
1 & \text{if $i=j$},\\
0 & \text{if $i\neq j$}.
\end{cases}
\end{displaymath}
Then
\begin{displaymath}
\|E_{\rm TL}\|_{A}\geq 1-\mu_{n_{\rm c}+1},
\end{displaymath}
and the equality holds if $\mathcal{R}(P)=\Span\{\mathbf{v}_{1},\ldots,\mathbf{v}_{n_{\rm c}}\}$.
\end{theorem}

\begin{proof}
Since $S\widetilde{M}_{\rm s}^{-1}S^{T}A$ has the same nonzero eigenvalues as $\widetilde{M}_{\rm s}^{-1}A_{\rm s}$ and 
\begin{displaymath}
\widetilde{M}_{\rm s}\succeq A_{\rm s}\succ 0,
\end{displaymath}
we conclude that
\begin{displaymath}
0=\mu_{1}=\cdots=\mu_{n-n_{\rm s}}<\mu_{n-n_{\rm s}+1}\leq\cdots\leq\mu_{n}\leq 1.
\end{displaymath}
Let
\begin{displaymath}
V=(\mathbf{v}_{1},\ldots,\mathbf{v}_{n}) \quad \text{and} \quad U_{1}=V^{-1}P(P^{T}V^{-T}V^{-1}P)^{-\frac{1}{2}}.
\end{displaymath}
It is easy to verify that $V^{T}AV=I$ and $U_{1}\in\mathbb{R}^{n\times n_{\rm c}}$ has orthonormal columns (i.e., $U_{1}^{T}U_{1}=I_{n_{\rm c}}$). Let $U_{2}$ be an $n\times(n-n_{\rm c})$ matrix such that $(U_{1} \,\ U_{2})\in\mathbb{R}^{n\times n}$ is an orthogonal matrix. Then
\begin{align*}
S\widetilde{M}_{\rm s}^{-1}S^{T}A(I-\Pi_{A})&=S\widetilde{M}_{\rm s}^{-1}S^{T}A\big(I-PA_{\rm c}^{-1}P^{T}A\big)\\
&=S\widetilde{M}_{\rm s}^{-1}S^{T}A\big(I-VU_{1}U_{1}^{T}V^{T}A\big)\\
&=S\widetilde{M}_{\rm s}^{-1}S^{T}A\big(I-VU_{1}U_{1}^{T}V^{-1}\big)\\
&=S\widetilde{M}_{\rm s}^{-1}S^{T}AV\big(I-U_{1}U_{1}^{T}\big)V^{-1}\\
&=V\Lambda U_{2}U_{2}^{T}V^{-1},
\end{align*}
where
\begin{displaymath}
\Lambda=\diag(0,\ldots,0,\mu_{n-n_{\rm s}+1},\ldots,\mu_{n}).
\end{displaymath}
Since $\Lambda U_{2}U_{2}^{T}$ is similar to $S\widetilde{M}_{\rm s}^{-1}S^{T}A(I-\Pi_{A})$ and $S\widetilde{M}_{\rm s}^{-1}S^{T}A(I-\Pi_{A})$ has $n-n_{\rm c}$ positive eigenvalues (see~\eqref{sim-1}), it follows that $\Lambda U_{2}U_{2}^{T}$ has $n-n_{\rm c}$ positive eigenvalues, which leads to $U_{2}^{T}\Lambda U_{2}\succ 0$. By~\eqref{new-id} and Lemma~\ref{lem:Poincare}, we have
\begin{align*}
\|E_{\rm TL}\|_{A}&=1-\lambda_{n_{\rm s}+n_{\rm c}-n+1}\big(\widetilde{M}_{\rm s}^{-1}S^{T}A(I-\Pi_{A})S\big)\\
&=1-\lambda_{n_{\rm c}+1}\big(S\widetilde{M}_{\rm s}^{-1}S^{T}A(I-\Pi_{A})\big)\\
&=1-\lambda_{n_{\rm c}+1}(\Lambda U_{2}U_{2}^{T})\\
&=1-\lambda_{1}(U_{2}^{T}\Lambda U_{2})\\
&\geq 1-\lambda_{n_{\rm c}+1}(\Lambda)\\
&=1-\mu_{n_{\rm c}+1}.
\end{align*}

In particular, if 
\begin{displaymath}
\mathcal{R}(P)=\Span\{\mathbf{v}_{1},\ldots,\mathbf{v}_{n_{\rm c}}\},
\end{displaymath}
then there exists a nonsingular matrix $P_{\rm c}\in\mathbb{R}^{n_{\rm c}\times n_{\rm c}}$ such that
\begin{displaymath}
P=V\begin{pmatrix}
P_{\rm c} \\
0
\end{pmatrix}.
\end{displaymath}
We then have
\begin{displaymath}
U_{1}=\begin{pmatrix}
P_{\rm c} \\
0
\end{pmatrix}(P_{\rm c}^{T}P_{\rm c})^{-\frac{1}{2}} \quad \text{and} \quad U_{2}U_{2}^{T}=I-U_{1}U_{1}^{T}=\begin{pmatrix}
0 & 0 \\
0 & I_{n-n_{\rm c}}
\end{pmatrix}.
\end{displaymath}
Hence,
\begin{displaymath}
\|E_{\rm TL}\|_{A}=1-\lambda_{n_{\rm c}+1}(\Lambda U_{2}U_{2}^{T})=1-\mu_{n_{\rm c}+1}.
\end{displaymath}
This completes the proof.
\end{proof}

\begin{remark}
For the special \textit{two-grid} case (i.e., $n_{\rm s}=n$), the corresponding optimal prolongation theory can be found in~\cite{XZ2017,Brannick2018}, in which the inner product induced by a symmetrized smoother is used. In the general case $n_{\rm s}<n$, $S\widetilde{M}_{\rm s}^{-1}S^{T}$ is a singular matrix, that is, its inverse does not exist and hence cannot induce an inner product in $\mathbb{R}^{n}$ (so does $\widetilde{M}_{\rm s}$, because $n_{\rm s}<n$).
\end{remark}

\subsection{The third application}

The third application of~\eqref{new-id} is to analyze the convergence of a class of reduction-based two-level methods (see Subsection~\ref{subsec:TLr}).

The following convergence estimates generalize and improve the existing ones in Theorem~\ref{thm:TLr0}.

\begin{theorem}\label{thm:TLr}
Let $A$ be partitioned as in~\eqref{two-by-two}, and let $D_{\rm ff}\in\mathbb{R}^{n_{\rm f}\times n_{\rm f}}$ be an SPD matrix such that
\begin{displaymath}
A_{\rm ff}\succeq D_{\rm ff}\succeq\frac{1}{1+\varepsilon}A_{\rm ff} \quad \text{and} \quad \begin{pmatrix}
D_{\rm ff} & A_{\rm fc} \\
A_{\rm cf} & A_{\rm cc}
\end{pmatrix}\succeq 0,
\end{displaymath}
where $\varepsilon>0$ is a parameter. Take
\begin{displaymath}
S=\begin{pmatrix}
I_{n_{\rm f}} \\
0
\end{pmatrix}, \quad M_{\rm s}=\omega D_{\rm ff}, \quad \text{and} \quad P=\begin{pmatrix}
-D_{\rm ff}^{-1}A_{\rm fc} \\
I_{n_{\rm c}}
\end{pmatrix},
\end{displaymath}
where $\omega>\frac{1}{2}(1+\varepsilon)$. Then, the convergence factor of Algorithm~{\rm\ref{alg:TL}} satisfies
\begin{equation}\label{new-TLr1}
\|E_{\rm TL}\|_{A}\leq 1-\omega^{-2}(2\omega-1-\varepsilon).
\end{equation}
More generally, if the pre- and postsmoothing steps in Algorithm~{\rm\ref{alg:TL}} are carried out $\nu$ times iteratively, then
\begin{equation}\label{new-TLr2}
\|E_{\rm TL}\|_{A}\leq 1-\frac{1-\big(1-\omega^{-1}(1+\varepsilon)\big)^{2\nu}}{1+\varepsilon}.
\end{equation}
Moreover, the upper bounds in~\eqref{new-TLr1} and~\eqref{new-TLr2} attain the minimum $\frac{\varepsilon}{1+\varepsilon}$ if and only if $\omega=1+\varepsilon$.
\end{theorem}

\begin{proof}
(i) Let
\begin{displaymath}
\Delta_{\rm ff}=A_{\rm ff}-D_{\rm ff}.
\end{displaymath}
Direct computations yield
\begin{align*}
\widetilde{M}_{\rm s}^{-1}&=\omega^{-2}D_{\rm ff}^{-1}(2\omega D_{\rm ff}-A_{\rm ff})D_{\rm ff}^{-1},\\
S^{T}A(I-\Pi_{A})S&=A_{\rm ff}-\Delta_{\rm ff}D_{\rm ff}^{-1}A_{\rm fc}(P^{T}AP)^{-1}A_{\rm cf}D_{\rm ff}^{-1}\Delta_{\rm ff}.
\end{align*}
The positive semidefiniteness of $\begin{pmatrix}
D_{\rm ff} & A_{\rm fc} \\
A_{\rm cf} & A_{\rm cc}
\end{pmatrix}$ implies that of the Schur complement $A_{\rm cc}-A_{\rm cf}D_{\rm ff}^{-1}A_{\rm fc}$. Then
\begin{align*}
&\sup_{\mathbf{v}_{\rm f}\in\mathbb{R}^{n_{\rm f}}\backslash\mathcal{N}(\Delta_{\rm ff})}\frac{\mathbf{v}_{\rm f}^{T}\Delta_{\rm ff}D_{\rm ff}^{-1}A_{\rm fc}(P^{T}AP)^{-1}A_{\rm cf}D_{\rm ff}^{-1}\Delta_{\rm ff}\mathbf{v}_{\rm f}}{\mathbf{v}_{\rm f}^{T}\Delta_{\rm ff}\mathbf{v}_{\rm f}}\\
&=\sup_{\mathbf{w}_{\rm f}\in\mathcal{R}(\Delta_{\rm ff})\backslash\{0\}}\frac{\mathbf{w}_{\rm f}^{T}\Delta_{\rm ff}^{\frac{1}{2}}D_{\rm ff}^{-1}A_{\rm fc}(P^{T}AP)^{-1}A_{\rm cf}D_{\rm ff}^{-1}\Delta_{\rm ff}^{\frac{1}{2}}\mathbf{w}_{\rm f}}{\mathbf{w}_{\rm f}^{T}\mathbf{w}_{\rm f}}\\
&\leq\lambda_{\max}\big(\Delta_{\rm ff}^{\frac{1}{2}}D_{\rm ff}^{-1}A_{\rm fc}(P^{T}AP)^{-1}A_{\rm cf}D_{\rm ff}^{-1}\Delta_{\rm ff}^{\frac{1}{2}}\big)\\
&=\lambda_{\max}\big(A_{\rm cf}D_{\rm ff}^{-1}\Delta_{\rm ff}D_{\rm ff}^{-1}A_{\rm fc}\big(A_{\rm cc}-A_{\rm cf}D_{\rm ff}^{-1}A_{\rm fc}+A_{\rm cf}D_{\rm ff}^{-1}\Delta_{\rm ff}D_{\rm ff}^{-1}A_{\rm fc}\big)^{-1}\big)\\
&\leq 1,
\end{align*}
which leads to
\begin{displaymath}
\Delta_{\rm ff}\succeq\Delta_{\rm ff}D_{\rm ff}^{-1}A_{\rm fc}(P^{T}AP)^{-1}A_{\rm cf}D_{\rm ff}^{-1}\Delta_{\rm ff}
\end{displaymath}
and hence 
\begin{equation}\label{SApi-low2}
S^{T}A(I-\Pi_{A})S\succeq A_{\rm ff}-\Delta_{\rm ff}=D_{\rm ff}.
\end{equation}

Using~\eqref{new-id} and the positive semidefiniteness of $S^{T}A(I-\Pi_{A})S-D_{\rm ff}$, we obtain
\begin{align*}
\|E_{\rm TL}\|_{A}&=1-\lambda_{\min}\big(\widetilde{M}_{\rm s}^{-1}S^{T}A(I-\Pi_{A})S\big)\\
&\leq 1-\lambda_{\min}\big(\widetilde{M}_{\rm s}^{-1}D_{\rm ff}\big)\\
&=1-\omega^{-2}\lambda_{\min}\big(2\omega I_{n_{\rm f}}-D_{\rm ff}^{-1}A_{\rm ff}\big)\\
&=1-\omega^{-2}\big(2\omega-\lambda_{\max}\big(D_{\rm ff}^{-1}A_{\rm ff}\big)\big),
\end{align*}
which, together with the fact $(1+\varepsilon)D_{\rm ff}\succeq A_{\rm ff}$, yields~\eqref{new-TLr1}.

(ii) Let $N_{\rm s}\in\mathbb{R}^{n_{\rm s}\times n_{\rm s}}$ be an equivalent smoother defined by the relation
\begin{displaymath}
I-SN_{\rm s}^{-1}S^{T}A=\big(I-SM_{\rm s}^{-1}S^{T}A\big)^{\nu},
\end{displaymath}
and let
\begin{displaymath}
R_{\rm ff}=I_{n_{\rm f}}-\omega^{-1}D_{\rm ff}^{-1}A_{\rm ff}.
\end{displaymath}
Then
\begin{align*}
SN_{\rm s}^{-1}S^{T}A&=I-\big(I-SM_{\rm s}^{-1}S^{T}A\big)^{\nu}\\
&=I-\big(I-\omega^{-1}SD_{\rm ff}^{-1}S^{T}A\big)^{\nu}\\
&=\omega^{-1}SD_{\rm ff}^{-1}S^{T}A\sum_{k=0}^{\nu-1}\big(I-\omega^{-1}SD_{\rm ff}^{-1}S^{T}A\big)^{k}\\
&=\omega^{-1}S\Bigg(\sum_{k=0}^{\nu-1}R_{\rm ff}^{k}\Bigg)D_{\rm ff}^{-1}S^{T}A,
\end{align*}
which, together with the fact that $S$ is of full column rank, gives
\begin{displaymath}
N_{\rm s}^{-1}=\omega^{-1}\Bigg(\sum_{k=0}^{\nu-1}R_{\rm ff}^{k}\Bigg)D_{\rm ff}^{-1}=\big(I_{n_{\rm f}}-R_{\rm ff}^{\nu}\big)A_{\rm ff}^{-1}.
\end{displaymath}
Due to
\begin{displaymath}
\lambda(R_{\rm ff})\subset(-1,1) \quad \text{and} \quad R_{\rm ff}A_{\rm ff}^{-1}=A_{\rm ff}^{-1}R_{\rm ff}^{T},
\end{displaymath}
it follows that the inverse $N_{\rm s}^{-1}$ is well defined and symmetric (so is $N_{\rm s}$). Note that $A_{\rm ff}^{\frac{1}{2}}R_{\rm ff}A_{\rm ff}^{-\frac{1}{2}}$ is symmetric and
\begin{displaymath}
\lambda\big(A_{\rm ff}^{\frac{1}{2}}R_{\rm ff}A_{\rm ff}^{-\frac{1}{2}}\big)\subset(-1,1).
\end{displaymath}
We then have
\begin{align*}
N_{\rm s}+N_{\rm s}^{T}-A_{\rm s}&=2A_{\rm ff}\big(I_{n_{\rm f}}-R_{\rm ff}^{\nu}\big)^{-1}-A_{\rm ff}\\
&=2A_{\rm ff}^{\frac{1}{2}}\Big(A_{\rm ff}^{\frac{1}{2}}\big(I_{n_{\rm f}}-R_{\rm ff}^{\nu}\big)^{-1}A_{\rm ff}^{-\frac{1}{2}}\Big)A_{\rm ff}^{\frac{1}{2}}-A_{\rm ff}\\
&=2A_{\rm ff}^{\frac{1}{2}}\Big(I_{n_{\rm f}}-\big(A_{\rm ff}^{\frac{1}{2}}R_{\rm ff}A_{\rm ff}^{-\frac{1}{2}}\big)^{\nu}\Big)^{-1}A_{\rm ff}^{\frac{1}{2}}-A_{\rm ff}\\
&\succ 0.
\end{align*}
Define
\begin{displaymath}
\widetilde{N}_{\rm s}:=N_{\rm s}^{T}\big(N_{\rm s}+N_{\rm s}^{T}-A_{\rm s}\big)^{-1}N_{\rm s} \quad \text{and} \quad \varphi(x):=\frac{1-\big(1-\omega^{-1}x\big)^{2\nu}}{x}.
\end{displaymath}
Then
\begin{align*}
\|E_{\rm TL}\|_{A}&=1-\lambda_{\min}\big(\widetilde{N}_{\rm s}^{-1}S^{T}A(I-\Pi_{A})S\big)\\
&\leq 1-\lambda_{\min}\big(\widetilde{N}_{\rm s}^{-1}D_{\rm ff}\big)\\
&=1-\lambda_{\min}\big(\big(I_{n_{\rm f}}-R_{\rm ff}^{2\nu}\big)A_{\rm ff}^{-1}D_{\rm ff}\big)\\
&=1-\lambda_{\min}\big(\big[I_{n_{\rm f}}-\big(I_{n_{\rm f}}-\omega^{-1}D_{\rm ff}^{-1}A_{\rm ff}\big)^{2\nu}\big]A_{\rm ff}^{-1}D_{\rm ff}\big)\\
&\leq 1-\min_{x\in[1,1+\varepsilon]}\varphi(x),
\end{align*}
where we have used the fact $\lambda\big(D_{\rm ff}^{-1}A_{\rm ff}\big)\subset[1,1+\varepsilon]$.

Straightforward calculations yield
\begin{displaymath}
\frac{\mathrm{d}\varphi(x)}{\mathrm{d}x}=\frac{\psi(x)}{x^{2}} \quad \text{and} \quad \frac{\mathrm{d}\psi(x)}{\mathrm{d}x}=-2\omega^{-2}\nu(2\nu-1)x\big(1-\omega^{-1}x\big)^{2\nu-2},
\end{displaymath}
where
\begin{displaymath} \psi(x)=\big(1+(2\nu-1)\omega^{-1}x\big)\big(1-\omega^{-1}x\big)^{2\nu-1}-1.
\end{displaymath}
It is easy to see that $\psi(x)$ is a decreasing function on $[1,1+\varepsilon]$, which leads to
\begin{displaymath}
\psi(x)\leq\psi(1)=\big(1+(2\nu-1)\omega^{-1}\big)\big(1-\omega^{-1}\big)^{2\nu-1}-1.
\end{displaymath}
\begin{itemize}

\item If $\omega\geq 1$, then
\begin{displaymath}
\psi(1)\leq\big(1-\omega^{-2}\big)^{2\nu-1}-1<0,
\end{displaymath}
where we have used the Bernoulli inequality
\begin{displaymath}
1+(2\nu-1)\omega^{-1}\leq\big(1+\omega^{-1}\big)^{2\nu-1}.
\end{displaymath}

\item If $\frac{1}{2}(1+\varepsilon)<\omega<1$, then $1-\omega^{-1}<0$, which implies that $\psi(1)<0$.

\end{itemize}
Hence, it always holds that $\psi(x)<0$ on the interval $[1,1+\varepsilon]$, i.e., $\varphi(x)$ is a strictly decreasing function on $[1,1+\varepsilon]$. Thus,
\begin{displaymath}
\|E_{\rm TL}\|_{A}\leq 1-\min_{x\in[1,1+\varepsilon]}\varphi(x)=1-\varphi(1+\varepsilon),
\end{displaymath}
which gives the estimate~\eqref{new-TLr2}.

(iii) Obviously, the estimate~\eqref{new-TLr2} will reduce to~\eqref{new-TLr1} if $\nu=1$. Since
\begin{displaymath}
\frac{\mathrm{d}\big(1-\omega^{-1}(1+\varepsilon)\big)^{2\nu}}{\mathrm{d}\omega}=2\nu\big(1-\omega^{-1}(1+\varepsilon)\big)^{2\nu-1}\omega^{-2}(1+\varepsilon),
\end{displaymath}
we conclude that the upper bound in~\eqref{new-TLr2} first strictly decreases and then strictly increases with respect to $\omega$. Moreover, its minimum, $\frac{\varepsilon}{1+\varepsilon}$, is attained if and only if $\omega=1+\varepsilon$. This completes the proof.
\end{proof}

\begin{remark}\label{rmk:comp}
The optimal bound $\frac{\varepsilon}{1+\varepsilon}$ stated in Theorem~\ref{thm:TLr} is less than the upper bounds in~\eqref{old-TLr1} and~\eqref{old-TLr2}. In addition, the estimate~\eqref{old-TLr1} (resp.,~\eqref{old-TLr2}) can be obtained from~\eqref{new-TLr1} (resp.,~\eqref{new-TLr2}) by taking $\omega=1+\frac{\varepsilon}{2}$.
\end{remark}

\begin{remark}
A more general reduction-based two-level method was analyzed in~\cite{Gossler2016}: on the one hand, two independent approximations to $A_{\rm ff}$ were respectively used to define the smoother and prolongation matrix (see also~\cite{Brannick2010}); on the other hand, some polynomials (including the Chebyshev ones) were used for designing the smoothing process. We remark that the upper bounds for $\|E_{\rm TL}\|_{A}$ proved in~\cite[Theorems~3.3 and~4.1]{Gossler2016} are \textit{strictly greater} than $1-\alpha$ ($\alpha=\frac{1}{1+\varepsilon}$ in our setting), that is, the optimal bound $\frac{\varepsilon}{1+\varepsilon}$ in Theorem~\ref{thm:TLr} is less than the convergence bounds in~\cite{Gossler2016}.
\end{remark}

\subsection{Further discussions}

In this subsection, we deeply discuss some important topics on the reduction-based two-level method described in Theorem~\ref{thm:TLr}, including both theoretical and numerical aspects.

\subsubsection{On the assumptions of Theorem~{\rm\ref{thm:TLr}}}

Recall that the two main assumptions of Theorem~\ref{thm:TLr} are
\begin{equation}\label{assump-1}
\lambda\big(D_{\rm ff}^{-1}A_{\rm ff}\big)\subset[1,1+\varepsilon]
\end{equation}
and
\begin{equation}\label{assump-2}
\begin{pmatrix}
D_{\rm ff} & A_{\rm fc} \\
A_{\rm cf} & A_{\rm cc}
\end{pmatrix}\succeq 0.
\end{equation}
The assumption~\eqref{assump-1} can be equivalently expressed as
\begin{displaymath}
\mathbf{v}_{\rm f}^{T}D_{\rm ff}\mathbf{v}_{\rm f}\leq\mathbf{v}_{\rm f}^{T}A_{\rm ff}\mathbf{v}_{\rm f}\leq(1+\varepsilon)\mathbf{v}_{\rm f}^{T}D_{\rm ff}\mathbf{v}_{\rm f} \quad \forall\,\mathbf{v}_{\rm f}\in\mathbb{R}^{n_{\rm f}}.
\end{displaymath}
Theorem~\ref{thm:TLr} suggests that the convergence factor $\|E_{\rm TL}\|_{A}$ can be bounded by a constant that depends only on the spectral equivalence parameter between $D_{\rm ff}$ and $A_{\rm ff}$, indicating uniform convergence in the $A$-norm.

Of particular interest is the design of an easy-to-invert approximation $D_{\rm ff}$ to $A_{\rm ff}$ satisfying~\eqref{assump-1} and~\eqref{assump-2}. Such an approximation can be easily constructed when the SPD matrix $A$ is diagonally dominant, as discussed below.

Let $A=(a_{ij})_{n\times n}$ be partitioned as in~\eqref{two-by-two}. Define
\begin{displaymath}
\theta_{i}:=\frac{a_{ii}}{\displaystyle\sum_{j=1}^{n_{\rm f}}|a_{ij}|} \quad (i=1,\ldots,n_{\rm f}) \quad \text{and} \quad \theta_{\min}:=\min\{\theta_{1},\ldots,\theta_{n_{\rm f}}\}.
\end{displaymath}
The definition of $\theta_{i}$ implies that $\theta_{i}\in(0,1]$ and hence $\theta_{\min}\in(0,1]$. Note that $\theta_{\min}$ is strictly less than $1$ in general. Otherwise, $A_{\rm ff}$ must be a diagonal matrix, in which case one can directly take $D_{\rm ff}=A_{\rm ff}$. In addition, if $A$ is diagonally dominant, then $A_{\rm ff}$ is diagonally dominant as well and hence
\begin{displaymath}
\theta_{i}=\frac{a_{ii}}{a_{ii}+\displaystyle\sum_{\substack{j=1 \\ j\neq i}}^{n_{\rm f}}|a_{ij}|}\geq\frac{1}{2}.
\end{displaymath}
It was proved in~\cite[Corollary~2 and Theorem~5]{MacLachlan2007} that, if $A$ is diagonally dominant and $\theta_{i}\in\big(\frac{1}{2},1\big]$ for all $i=1,\ldots,n_{\rm f}$, then
\begin{equation}\label{Dff}
D_{\rm ff}=\diag\big(\big(2-\theta_{1}^{-1}\big)a_{11},\ldots,\big(2-\theta_{n_{\rm f}}^{-1}\big)a_{n_{\rm f}n_{\rm f}}\big)
\end{equation}
satisfies the assumptions~\eqref{assump-1} and~\eqref{assump-2}, with $\varepsilon=\frac{2(1-\theta_{\min})}{2\theta_{\min}-1}$.

\subsubsection{Lower bounds for $\|E_{\rm TL}\|_{A}$}

Besides upper bounds, we can derive lower bounds for the convergence factor $\|E_{\rm TL}\|_{A}$, which may provide necessary conditions for fast convergence. An obvious fact is
\begin{displaymath}
A_{\rm ff}=S^{T}AS\succeq S^{T}A(I-\Pi_{A})S.
\end{displaymath}
Then
\begin{align*}
\|E_{\rm TL}\|_{A}&=1-\lambda_{\min}\big(\widetilde{M}_{\rm s}^{-1}S^{T}A(I-\Pi_{A})S\big)\\
&\geq 1-\lambda_{\min}\big(\widetilde{M}_{\rm s}^{-1}A_{\rm ff}\big)\\
&=1-\omega^{-2}\lambda_{\min}\big(\big(2\omega I_{n_{\rm f}}-D_{\rm ff}^{-1}A_{\rm ff}\big)D_{\rm ff}^{-1}A_{\rm ff}\big).
\end{align*}
Since
\begin{displaymath}
\lambda\big(D_{\rm ff}^{-1}A_{\rm ff}\big)\subset[1,1+\varepsilon] \quad \text{and} \quad \omega>\frac{1}{2}(1+\varepsilon),
\end{displaymath}
it follows that
\begin{equation}\label{ETG-low1}
\|E_{\rm TL}\|_{A}\geq 1-\omega^{-2}\min\{2\omega-1,\,(2\omega-1-\varepsilon)(1+\varepsilon)\}.
\end{equation}
In the case of multiple pre- and postsmoothing steps, we have
\begin{align*}
\|E_{\rm TL}\|_{A}&=1-\lambda_{\min}\big(\widetilde{N}_{\rm s}^{-1}S^{T}A(I-\Pi_{A})S\big)\\
&\geq 1-\lambda_{\min}\big(\widetilde{N}_{\rm s}^{-1}A_{\rm ff}\big)\\
&=1-\lambda_{\min}\big(I_{n_{\rm f}}-R_{\rm ff}^{2\nu}\big)\\
&=\lambda_{\max}\big(\big(I_{n_{\rm f}}-\omega^{-1}D_{\rm ff}^{-1}A_{\rm ff}\big)^{2\nu}\big),
\end{align*}
which, combined with the fact $\lambda\big(D_{\rm ff}^{-1}A_{\rm ff}\big)\subset[1,1+\varepsilon]$, leads to
\begin{equation}\label{ETG-low2}
\|E_{\rm TL}\|_{A}\geq\max\big\{(1-\omega^{-1})^{2\nu},\,\big(1-\omega^{-1}(1+\varepsilon)\big)^{2\nu}\big\}.
\end{equation}
It is easy to see that~\eqref{ETG-low1} is a special case of~\eqref{ETG-low2}.

\subsubsection{Upper bounds involving the C.B.S. constant $\gamma$}

A key ingredient in the proof of Theorem~\ref{thm:TLr} is the relation~\eqref{SApi-low2}. Similarly, using~\eqref{SApi-low1}, one can get an estimate for $\|E_{\rm TL}\|_{A}$, given by~\eqref{ETG-up}. Under the settings of Theorem~\ref{thm:TLr}, \eqref{ETG-up} reads
\begin{displaymath}
\|E_{\rm TL}\|_{A}\leq 1-(1-\gamma^{2})\omega^{-2}\lambda_{\min}\big(\big(2\omega I_{n_{\rm f}}-D_{\rm ff}^{-1}A_{\rm ff}\big)D_{\rm ff}^{-1}A_{\rm ff}\big),
\end{displaymath}
which, together with the fact $\lambda\big(D_{\rm ff}^{-1}A_{\rm ff}\big)\subset[1,1+\varepsilon]$, yields
\begin{equation}\label{TLr-gamma}
\|E_{\rm TL}\|_{A}\leq 1-(1-\gamma^{2})\omega^{-2}\min\{2\omega-1,\,(2\omega-1-\varepsilon)(1+\varepsilon)\}.
\end{equation}
Besides $\varepsilon$ and $\omega$, the upper bound in~\eqref{TLr-gamma} depends on the C.B.S. constant $\gamma$. In practice, it is often difficult to compute or assess $\gamma$. As a result, the upper bounds in~\eqref{new-TLr1} and~\eqref{TLr-gamma} are theoretically incomparable in general.

According to~\eqref{TLr-gamma}, one can derive a more tractable upper bound for $\|E_{\rm TL}\|_{A}$ by bounding $1-\gamma^{2}$ from below. By~\eqref{CBS} and~\eqref{SApi-low2}, we have
\begin{align*}
1-\gamma^{2}&=1-\lambda_{\max}\big(A_{\rm s}^{-1}S^{T}APA_{\rm c}^{-1}P^{T}AS\big)\\
&=\lambda_{\min}\big(I_{n_{\rm f}}-A_{\rm s}^{-1}S^{T}APA_{\rm c}^{-1}P^{T}AS\big)\\
&=\lambda_{\min}\big(A_{\rm s}^{-1}S^{T}A(I-\Pi_{A})S\big)\\
&\geq\lambda_{\min}\big(A_{\rm ff}^{-1}D_{\rm ff}\big)\\
&\geq\frac{1}{1+\varepsilon},
\end{align*}
which, combined with~\eqref{TLr-gamma}, gives
\begin{equation}\label{ETG-up1}
\|E_{\rm TL}\|_{A}\leq 1-\omega^{-2}\min\bigg\{\frac{2\omega-1}{1+\varepsilon},\,2\omega-1-\varepsilon\bigg\}.
\end{equation}
If the pre- and postsmoothing steps in Algorithm~\ref{alg:TL} are performed $\nu$ times iteratively, then
\begin{align*}
\|E_{\rm TL}\|_{A}&\leq 1-(1-\gamma^{2})\lambda_{\min}\big(\widetilde{N}_{\rm s}^{-1}A_{\rm s}\big)\\
&=1-(1-\gamma^{2})\lambda_{\min}\big(I_{n_{\rm f}}-R_{\rm ff}^{2\nu}\big)\\
&=1-(1-\gamma^{2})\big(1-\lambda_{\max}\big(\big(I_{n_{\rm f}}-\omega^{-1}D_{\rm ff}^{-1}A_{\rm ff}\big)^{2\nu}\big)\big),
\end{align*}
which leads to
\begin{equation}\label{ETG-up2}
\|E_{\rm TL}\|_{A}\leq 1-\frac{1-\max\big\{(1-\omega^{-1})^{2\nu},\,\big(1-\omega^{-1}(1+\varepsilon)\big)^{2\nu}\big\}}{1+\varepsilon}.
\end{equation}
It is easy to see that~\eqref{ETG-up2} will reduce to~\eqref{ETG-up1} if $\nu=1$. We point out that
\begin{itemize}

\item \eqref{new-TLr1} and~\eqref{new-TLr2} are sharper than~\eqref{ETG-up1} and~\eqref{ETG-up2}, respectively;

\item the upper bounds in~\eqref{ETG-up1} and~\eqref{ETG-up2} can be minimized by $\omega=1+\frac{\varepsilon}{2}$, in which case~\eqref{ETG-up1} and~\eqref{ETG-up2} will reduce to~\eqref{old-TLr1} and~\eqref{old-TLr2}, respectively.

\end{itemize}

\subsubsection{Numerical comparisons}

As indicated in Remark~\ref{rmk:comp}, our new theory shows that $\omega=1+\varepsilon$ can yield a smaller upper bound for $\|E_{\rm TL}\|_{A}$ compared to the weight used in Theorem~\ref{thm:TLr0}. In some cases, $\omega=1+\varepsilon$ may result in a faster algorithm. In what follows, we compare these two weights via a numerical example.

The 2D Poisson's equation with homogeneous Dirichlet boundary conditions can be expressed as
\begin{equation}\label{Poisson}
\left\{
\begin{aligned}
-\Delta u&=f \quad \text{in $\Omega$},\\
u&=0 \quad \text{on $\partial\Omega$},
\end{aligned}
\right.
\end{equation}
where $\Omega=(0,1)\times(0,1)$. Let $\Omega$ be partitioned uniformly in both $x$- and $y$-directions into $(m+1)^{2}$ pieces (i.e., $(m+1)\times(m+1)$ grid), and let
\begin{displaymath}
(x_{i},y_{j})=(ih,jh) \quad \forall\,i,j=0,1,\ldots,m+1,
\end{displaymath}
where $h=\frac{1}{m+1}$. Consider a five-point finite difference stencil for the operator $-\Delta$, given by
\begin{displaymath}
\frac{1}{h^{2}}\begin{bmatrix}
   & -1 &  \\
-1 & 4 & -1 \\
   & -1 & 
\end{bmatrix}.
\end{displaymath}
In our experiments, all functions or their approximations evaluated at $(x_{i},y_{j})$ are ordered lexicographically. Then, the five-point finite difference discretization of~\eqref{Poisson} leads to the linear system
\begin{equation}\label{system-0}
A_{0}\mathbf{u}_{0}=\mathbf{f}_{0},
\end{equation}
where
\begin{displaymath}
A_{0}=I_{m}\otimes B+C\otimes I_{m}
\end{displaymath}
with $B={\rm tridiag}(-1,4,-1)\in\mathbb{R}^{m\times m}$ and $C={\rm tridiag}(-1,0,-1)\in\mathbb{R}^{m\times m}$, and $\mathbf{f}_{0}$ is formed by ordering $\big\{h^{2}f(x_{i},y_{j})\big\}_{i,j=1}^{m}$ lexicographically.

According to the strategy of designing $D_{\rm ff}$ (see~\eqref{Dff}), the block $A_{\rm ff}$ is expected to be sufficiently diagonally dominant. To get such a subblock, we apply the \textit{greedy coarsening algorithm} in~\cite[Algorithm~3]{MacLachlan2007} to $A_{0}$, with threshold $\theta=0.55$. Based on the resulting fine and coarse points, we reorder the entries and unknowns in~\eqref{system-0} accordingly, that is, the original system~\eqref{system-0} will be equivalently transformed into
\begin{equation}\label{system-new}
\begin{pmatrix}
\mathscr{P}_{\rm f}^{T} \\
\mathscr{P}_{\rm c}^{T}
\end{pmatrix}A_{0}(\mathscr{P}_{\rm f} \,\ \mathscr{P}_{\rm c})\begin{pmatrix}
\mathscr{P}_{\rm f}^{T} \\
\mathscr{P}_{\rm c}^{T}
\end{pmatrix}\mathbf{u}_{0}=\begin{pmatrix}
\mathscr{P}_{\rm f}^{T} \\
\mathscr{P}_{\rm c}^{T}
\end{pmatrix}\mathbf{f}_{0},
\end{equation}
where $(\mathscr{P}_{\rm f} \,\ \mathscr{P}_{\rm c})\in\mathbb{R}^{m^{2}\times m^{2}}$ is a permutation matrix such that $\mathscr{P}_{\rm f}^{T}\mathbf{u}_{0}$ and $\mathscr{P}_{\rm c}^{T}\mathbf{u}_{0}$ are low-dimensional vectors consisting of fine- and coarse-grid unknowns, respectively.

After the permutation process, we apply Algorithm~\ref{alg:TL} to the linear system~\eqref{system-new}. Main settings in our experiments are listed below:
\begin{itemize}

\item $f=2\pi^{2}\sin(\pi x)\sin(\pi y)$;

\item $\mathbf{u}^{(0)}$ is chosen randomly and then fixed;

\item $S=\begin{pmatrix}
I_{n_{\rm f}} \\
0
\end{pmatrix}$;

\item $M_{\rm s}=\omega D_{\rm ff}$, where $D_{\rm ff}$ is given by~\eqref{Dff};

\item $P=\begin{pmatrix}
-D_{\rm ff}^{-1}A_{\rm fc} \\
I_{n_{\rm c}}
\end{pmatrix}$ with $A_{\rm fc}=\mathscr{P}_{\rm f}^{T}A_{0}\mathscr{P}_{\rm c}$.

\end{itemize}
The numerical results are shown in Table~\ref{tab:rhos}, where $\varepsilon$ is chosen as $\lambda_{\max}\big(D_{\rm ff}^{-1}A_{\rm ff}\big)-1$, $\rho_{1}$ and $\rho_{2}$ denote the asymptotic convergence factors of Algorithm~\ref{alg:TL} with weights $\omega_{1}=1+\varepsilon$ and $\omega_{2}=1+\frac{\varepsilon}{2}$, respectively.

\vskip -0.2cm

\begin{table}[!htbp]
\centering
\captionsetup{width=\linewidth}
\begin{tabular}{p{2cm}<{\centering}|p{1cm}<{\centering}p{1cm}<{\centering}p{1cm}<{\centering}p{1cm}<{\centering}p{1.2cm}<{\centering}p{1.2cm}<{\centering}}
\bottomrule \\ [-2.5ex]
Grid & $\varepsilon$ & $\omega_{1}$ & $\omega_{2}$ & $\nu$ & $\rho_{1}$ & $\rho_{2}$ \\
\hline \\ [-2.5ex]
&  &  &  & $1$ & $0.6269$ & $0.7247$  \\[0.25ex]
$32\times 32$ & $3.8022$ & $4.8022$ & $2.9011$ & $2$ & $0.5426$ & $0.6101$  \\[0.25ex]
&  &  &  & $3$ & $0.5334$ & $0.5626$  \\
\hline \\ [-2.5ex]
&  &  &  & $1$ & $0.6272$ & $0.7274$  \\[0.25ex]
$64\times 64$ & $3.8067$ & $4.8067$ & $2.9034$ & $2$ & $0.5459$ & $0.6135$  \\[0.25ex]
&  &  &  & $3$ & $0.5369$ & $0.5662$  \\
\hline \\ [-2.5ex]
&  &  &  & $1$ & $0.6273$ & $0.7285$  \\[0.25ex]
$128\times 128$ & $3.8078$ & $4.8078$ & $2.9039$ & $2$ & $0.5473$ & $0.6150$  \\[0.25ex]
&  &  &  & $3$ & $0.5383$ & $0.5677$  \\
\toprule
\end{tabular}
\vskip -0.2cm
\caption{\centering\small Numerical comparisons of the asymptotic convergence factors $\rho_{1}$ and $\rho_{2}$.}
\label{tab:rhos}
\end{table}

\vskip -0.2cm

From the last two columns of Table~\ref{tab:rhos}, we can observe that the weight $\omega_{1}=1+\varepsilon$ yields smaller convergence factors compared to $\omega_{2}=1+\frac{\varepsilon}{2}$. In addition, the resulting convergence factors do not degrade with respect to grid size.

\section{Conclusions} \label{sec:con}

The two-level convergence identity plays an important role in the analysis and design of multilevel methods, because many multilevel methods can be viewed as a perturbed variant of Algorithm~\ref{alg:TL}. In this paper, we present an easy-to-use identity for characterizing the convergence factor of two-level methods, whose hierarchical spaces can be either overlapping or non-overlapping. Furthermore, several applications have been provided to illustrate its usability and convenience.

\section*{Acknowledgments}

This work was partially supported by the National Natural Science Foundation of China (Grant No.\,12401479), the Natural Science Foundation of Jiangsu Province (Grant No.\,BK20241257), and the Start-up Research Fund of Southeast University (Grant No.\,RF1028623372).

\bibliographystyle{amsplain}

\end{document}